\newtheorem{thm}{Theorem}
\newtheorem{lemma}{Lemma}
\newtheorem{prop}{Proposition}
\theoremstyle{definition}
\newtheorem{remark}{Remark}
\newtheorem{ex}{Example}
\newcolumntype{d}[1]{D{.}{.}{#1}}
\begin{document}
\begin{frontmatter}

\title{Practical approaches to the estimation
of the ruin probability in a risk model
with additional funds}

\author[a]{\inits{Yu.}\fnm{Yuliya}\snm{Mishura}}\email{myus@univ.kiev.ua}
\author[a]{\inits{O.}\fnm{Olena}\snm{Ragulina}\corref{cor1}}\email
{ragulina.olena@gmail.com}
\cortext[cor1]{Corresponding author.}
\author[b]{\inits{O.}\fnm{Oleksandr}\snm{Stroyev}}\email{o.stroiev@chnu.edu.ua}

\address[a]{Taras Shevchenko National University of Kyiv,\\
Department of Probability Theory, Statistics and Actuarial Mathematics,\\
64 Volodymyrska Str., 01601 Kyiv, Ukraine}
\address[b]{Yuriy Fedkovych Chernivtsi National University,\\
Department of Mathematical Modelling,\\
2 Kotsjubynskyi Str., 58012 Chernivtsi, Ukraine}

\markboth{Yu. Mishura et al.}{Practical approaches to
the estimation of the ruin probability}

\begin{abstract}
We deal with a generalization of the classical risk model when an
insurance company
gets additional funds whenever a claim arrives and consider some
practical approaches
to the estimation of the ruin probability. In particular, we get an
upper exponential
bound and construct an analogue to the De Vylder approximation for the
ruin probability.
We compare results of these approaches with statistical estimates
obtained by
the Monte Carlo method for selected distributions of claim sizes and
additional funds.
\end{abstract}

\begin{keyword} Risk model \sep survival probability \sep exponential
bound \sep De Vylder approximation \sep Monte Carlo method
\MSC[2010] 91B30 \sep60G51
\end{keyword}

\received{4 January 2015}
\accepted{19 January 2015}
\publishedonline{2 February 2015}
\end{frontmatter}

\section{Introduction}
\label{se:1}

Let $(\varOmega, \mathfrak{F}, \mathbb{P})$ be a probability space satisfying
the usual conditions, and let all the objects be defined on it.
We deal with the risk model that generalizes the classical one and was
considered
in~\cite{Mishura_Ragulina_Stroyev}.

In the classical risk model (see, e.g.,~\cite{Asmussen, Grandell,
Rolski_Schmidli_Schmidt_Teugels}), an insurance company has an initial
surplus $x\ge0$ and
receives premiums with constant intensity $c>0$. Claim sizes form a sequence
$(\xi_i)_{i\ge1}$ of nonnegative i.i.d.\ random variables with c.d.f.\
$F_1(y)=\mathbb P[\xi_i\le y]$ and finite expectation $\mathbb E[\xi
_i]=\mu_1$.
The number of claims on the time interval $[0,t]$ is a homogeneous
Poisson process
$(N_t)_{t\ge0}$ with intensity $\lambda>0$.

In addition to the classical risk model, we suppose that the insurance company
gets additional funds $\eta_i$ when the $i$th claim arrives.
These funds can be considered, for instance, as additional investment income,
which does not depend on the surplus of the company.
We assume that $(\eta_i)_{i\ge1}$ is a sequence of nonnegative i.i.d.
random variables with c.d.f. $F_2(y)=\mathbb P[\eta_i\le y]$ and finite
expectation
$\mathbb E[\eta_i]=\mu_2$. The sequences $(\xi_i)_{i\ge1}$, $(\eta
_i)_{i\ge1}$
and the process $(N_t)_{t\ge0}$ are mutually independent.
Let $(\mathfrak{F}_t)_{t\ge0}$ be the filtration generated by $(\xi
_i)_{i\ge1}$,
$(\eta_i)_{i\ge1}$, and $(N_t)_{t\ge0}$.

Let $X_t(x)$ be the surplus of the insurance company at time $t$,
provided that
its initial surplus is $x$. Then the surplus process $ (X_t(x)
)_{t\ge0}$
is defined as
\begin{equation}
\label{eq:1} X_t(x) =x+ct-\sum_{i=1}^{N_t}(
\xi_i-\eta_i), \quad t\ge0.
\end{equation}
Note that we set $\sum_{i=1}^{0}(\xi_i-\eta_i)=0$ in~\eqref{eq:1} if $N_t=0$.

The ruin time is defined as
\[
\tau(x)= \inf\bigl\{t\ge0\colon X_t(x) <0\bigr\}.
\]
We suppose that $\tau(x) =\infty$ if $X_t (x) \ge0$ for all $t\ge0$.
The infinite-horizon ruin probability is given by
\[
\psi(x)= \mathbb P \bigl[\inf\nolimits_{t\ge0}\; X_t(x) <0
\bigr],
\]
which is equivalent to
\[
\psi(x)= \mathbb P \bigl[\tau(x)< \infty\bigr].
\]
The corresponding infinite-horizon survival probability equals
\[
\varphi(x)= 1-\psi(x).
\]

Note that the ruin never occurs if $\mathbb P[\xi_i-\eta_i \le0]=1$.
If $\mathbb P[\xi_i-\eta_i \ge0]=1$,\allowbreak then we deal with the classical
risk model.
So in what follows, we assume that\allowbreak $\mathbb P[\xi_i-\eta_i>0]>0$ and
$\mathbb P[\xi_i-\eta_i<0]>0$.
In this case, if $c-\lambda\mu_1+\lambda\mu_2 \le0$, then $\varphi
(x)=0$ for all $x\ge0$;
if $c-\lambda\mu_1+\lambda\mu_2 >0$, then $\lim_{x\to+\infty} \varphi(x)=1$
(see \cite[Lemma~2.1]{Mishura_Ragulina_Stroyev}).

In this paper, we consider some practical approaches to the estimation
of the ruin
probability. In particular, we get an upper exponential bound and construct
an analogue to the De Vylder approximation for the ruin probability. Moreover,
we compare results of these approaches with statistical estimates
obtained by
the Monte Carlo method for selected distributions of claim sizes and
additional funds.

Paper~\cite{Mishura_Ragulina_Stroyev}, where this risk model is
considered, is devoted to the investigation of continuity and
differentiability of the infinite-horizon survival probability and
derivation of an integro-differential equation for this function.
When claim sizes and additional funds are exponentially distributed,
a closed-form solution to this equation can be found.

\begin{thm}[\cite{Mishura_Ragulina_Stroyev}, Theorem~4.1]
\label{thm:1}
Let the surplus process $ (X_t(x) )_{t\ge0}$ follow~\eqref{eq:1}
under the above assumptions, the random variables $\xi_i$ and $\eta_i$,
$i\ge1$,
be exponentially distributed with means $\mu_1$ and $\mu_2$ correspondingly,
and $c-\lambda\mu_1+\lambda\mu_2 >0$. Then
\begin{equation}
\label{eq:2} \varphi(x)=1+\frac{\lambda\mu_1(1-\alpha\mu_2)}{
(c\alpha-\lambda)(1-\alpha\mu_2)(\mu_1+\mu_2)+\lambda\mu_2}\,e^{\alpha x}
\end{equation}
for all $x\ge0$, where
\[
\alpha=\frac{\lambda\mu_1\mu_2+c\mu_1-c\mu_2
-\sqrt{c^2(\mu_1^2+\mu_2^2)+\lambda^2\mu_1^2\mu_2^2
+2c\mu_1\mu_2(c-\lambda\mu_1+\lambda\mu_2)}}{
2c\mu_1\mu_2}.
\]
\end{thm}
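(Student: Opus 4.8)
The plan is to obtain \eqref{eq:2} by solving explicitly the integro-differential equation for $\varphi$ established in \cite{Mishura_Ragulina_Stroyev}. Conditioning on the epoch of the first claim and on $(\xi_1,\eta_1)$, and using that the surplus grows deterministically with slope $c$ before that epoch while $\varphi(y)=0$ for $y<0$ (ruin having occurred), one has
\[
c\varphi'(x)=\lambda\varphi(x)-\lambda\int_0^\infty\int_0^\infty\varphi(x-u+v)\,dF_1(u)\,dF_2(v),\qquad x\ge0,
\]
with the convention $\varphi(y)=0$ for $y<0$; the differentiability needed here is part of the regularity results of \cite{Mishura_Ragulina_Stroyev} (alternatively, a bootstrap on the equation itself gives $\varphi\in C^\infty$). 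From Lemma~2.1 of \cite{Mishura_Ragulina_Stroyev} we also retain that $\varphi$ is bounded and $\lim_{x\to+\infty}\varphi(x)=1$. Under the exponential hypothesis the net loss $Z=\xi_1-\eta_1$ has density $\frac{1}{\mu_1+\mu_2}e^{-z/\mu_1}$ for $z\ge0$ and $\frac{1}{\mu_1+\mu_2}e^{z/\mu_2}$ for $z<0$ (a one-line Fubini computation), so, splitting the integral where the argument of $\varphi$ changes sign, the equation takes the form
\[
c\varphi'(x)=\lambda\varphi(x)-\frac{\lambda}{\mu_1+\mu_2}\left(e^{x/\mu_2}\int_x^\infty\varphi(w)e^{-w/\mu_2}\,dw+e^{-x/\mu_1}\int_0^x\varphi(w)e^{w/\mu_1}\,dw\right).
\]

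Next I would remove the integrals by differentiation. Writing $I_1(x),I_2(x)$ for the two integral terms, one checks $I_1'=I_1/\mu_2-\varphi$ and $I_2'=\varphi-I_2/\mu_1$; differentiating the equation once and using it to eliminate $I_1$, then differentiating again and eliminating $I_2$, produces the constant-coefficient linear equation
\[
c\mu_1\mu_2\,\varphi'''-(\lambda\mu_1\mu_2+c\mu_1-c\mu_2)\,\varphi''-(c-\lambda\mu_1+\lambda\mu_2)\,\varphi'=0,
\]
whose characteristic polynomial is $r$ times the quadratic $c\mu_1\mu_2r^2-(\lambda\mu_1\mu_2+c\mu_1-c\mu_2)r-(c-\lambda\mu_1+\lambda\mu_2)$. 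Since $c-\lambda\mu_1+\lambda\mu_2>0$, the product of the roots of this quadratic is negative, so it has one positive root $r_+$ and one negative root, and expanding its discriminant shows the negative root is exactly the $\alpha$ of the theorem (the radicand being $c^2(\mu_1^2+\mu_2^2)+\lambda^2\mu_1^2\mu_2^2+2c\mu_1\mu_2(c-\lambda\mu_1+\lambda\mu_2)$). Hence $\varphi(x)=B_1e^{r_+x}+B_2e^{\alpha x}+B_3$.

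It then remains to fix the constants. Boundedness of $\varphi$ forces $B_1=0$, and $\lim_{x\to+\infty}\varphi(x)=1$ gives $B_3=1$, so $\varphi(x)=1+B_2e^{\alpha x}$. Substituting this into the \emph{original} integro-differential equation, the two integrals are elementary (using $\alpha<0<1/\mu_2$), and the residual reduces to a linear combination of $e^{\alpha x}$ and $e^{-x/\mu_1}$: the $e^{\alpha x}$ coefficient vanishes because $\alpha$ solves the quadratic, and the $e^{-x/\mu_1}$ coefficient vanishes precisely when $B_2$ equals the coefficient displayed in \eqref{eq:2} (equivalently, one may simply impose the equation at $x=0$, where the second integral is $0$, and solve the resulting scalar relation for $B_2$). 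Since $\varphi$ is characterised as the bounded solution of the equation, the function so obtained is the survival probability, which gives \eqref{eq:2}.

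The main obstacle is the elimination step — pushing the two \emph{distinct} exponential kernels through two differentiations without algebraic slips, and then confirming that the quadratic's discriminant collapses to the stated radicand. A secondary point requiring care is that differentiating the equation enlarges the solution set, so one must argue correctly that boundedness, the limit at infinity, and one extra scalar condition pick out exactly the genuine solution of the original equation.
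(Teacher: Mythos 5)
The paper itself gives no proof of Theorem~\ref{thm:1}: it is quoted from \cite{Mishura_Ragulina_Stroyev}, whose method the paper describes as deriving an integro-differential equation for $\varphi$ and solving it in closed form in the exponential case --- exactly the route you take. Your reconstruction is correct: the elimination does yield the stated third-order equation, its discriminant collapses to the radicand defining $\alpha$, and evaluating the original equation at $x=0$ (where the second integral vanishes) produces precisely the coefficient in \eqref{eq:2}, so your argument matches the cited proof in substance.
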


\begin{remark}[\cite{Mishura_Ragulina_Stroyev}, Remark~4.1]
\label{remark:1}
It is justified in the proof of Theorem~\ref{thm:1} that $\alpha<0$ and
\[
-1<\frac{\lambda\mu_1(1-\alpha\mu_2)}{
(c\alpha-\lambda)(1-\alpha\mu_2)(\mu_1+\mu_2)+\lambda\mu_2}<0.
\]
So the function $\varphi(x)$ defined by~\eqref{eq:2} satisfies all the natural
properties of the survival probability. In particular, this function is
nondecreasing and bounded by 0 from below and by 1 from above.
\end{remark}

It is well known that even for the classical risk model, there are only
a few cases
where an analytic expression for the survival probability can be found.
So numerous approximations have been considered and investigated
for the classical risk model (see, e.g.,~\cite{Asmussen, Beekman, Choi,
DeVylder, Grandell, Rolski_Schmidli_Schmidt_Teugels}).
``Simple approximations'' form a special class of approximations for
the ruin or survival probabilities. They use only some moments of the
distribution
of claim sizes and do not take into account the detailed tail behavior
of that
distribution. Such approximations may be based on limit theorems or on heuristic
arguments. The most successful ``simple approximation'' is certainly
the De Vylder
approximation~\cite{DeVylder}, which is based on the heuristic idea to replace
the risk process with a risk process with exponentially distributed
claim sizes
such that the first three moments coincide (see also~\cite{Grandell,
Rolski_Schmidli_Schmidt_Teugels}). This approximation is known to work extremely
well for some distributions of claim sizes. Later, Grandell analyzed
the De Vylder
approximation and other ``simple approximations'' from a more
mathematical point
of view and gave a possible explanation why the De Vylder approximation
is so good
(see~\cite{Grandell2000}).

We deal with the case where the claim sizes have a light-tailed distribution.
The rest of the paper is organized as follows.
In Section~\ref{se:2}, we get an upper exponential bound for the ruin
probability,
which is an analogue of the famous Lundberg inequality.
Section~\ref{se:3} is devoted to the construction of an analogue of the
De Vylder
approximation. In Section~\ref{se:4}, we give a simple formula that relates
the accuracy and reliability of the approximation of the ruin
probability by
its statistical estimate obtained by the Monte Carlo method. In
Section~\ref{se:5},
we compare the results of these approaches for some distributions of
claim sizes
and additional funds. Section~\ref{se:6} concludes the paper.

\section{Exponential bound}
\label{se:2}

To get an upper exponential bound for the ruin probability, we use the
martingale
approach introduced by Gerber~\cite{Gerber} (see also~\cite{Boikov, Grandell,
Rolski_Schmidli_Schmidt_Teugels}).

Let\begingroup\abovedisplayskip=5pt\belowdisplayskip=5pt
\[
U_t=ct-\sum_{i=1}^{N_t}(
\xi_i-\eta_i), \quad t\ge0.
\]

For all $R\ge0$, we define the exponential process $ (V_t(R)
)_{t\ge0}$
by
\[
V_t(R)=e^{-RU_t}.
\]

\begin{lemma}
\label{lemma:1}
If there is $\hat R>0$ such that
\begin{equation}
\label{eq:3} \lambda \Biggl(\int_0^{+\infty}
e^{\hat Ry}\, dF_1(y) \cdot\int_0^{+\infty}
e^{-\hat Ry}\, dF_2(y) -1 \Biggr)=c\hat R,
\end{equation}
then $ (V_t(\hat R) )_{t\ge0}$ is an $(\mathfrak{F}_t)$-martingale.
\end{lemma}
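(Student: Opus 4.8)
The plan is to verify the martingale property directly from the definition, exploiting the independence and stationary-increments structure of the compound Poisson model. Fix $s\le t$ and write $V_t(\hat R)=V_s(\hat R)\cdot e^{-\hat R(U_t-U_s)}$. Since $U_t-U_s = c(t-s) - \sum_{i=N_s+1}^{N_t}(\xi_i-\eta_i)$ depends only on the Poisson increment $N_t-N_s$ and on the claim sizes and additional funds with indices exceeding $N_s$, it is independent of $\mathfrak{F}_s$; moreover $V_s(\hat R)$ is $\mathfrak{F}_s$-measurable. Hence
\[
\mathbb{E}\bigl[V_t(\hat R)\mid\mathfrak{F}_s\bigr]
= V_s(\hat R)\cdot\mathbb{E}\bigl[e^{-\hat R(U_t-U_s)}\bigr],
\]
so it suffices to show the last expectation equals $1$, i.e.\ that $\mathbb{E}[e^{-\hat R U_{t-s}}]=1$ for every $t-s\ge 0$.

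The next step is to compute $\mathbb{E}[e^{-\hat R U_h}]$ for $h\ge 0$ by conditioning on $N_h$. We have
\[
\mathbb{E}\bigl[e^{-\hat R U_h}\bigr]
= e^{-\hat R c h}\,\mathbb{E}\Bigl[\prod_{i=1}^{N_h} e^{-\hat R(\xi_i-\eta_i)}\Bigr]
= e^{-\hat R c h}\sum_{n=0}^{\infty} e^{-\lambda h}\frac{(\lambda h)^n}{n!}\,
\Bigl(\mathbb{E}\bigl[e^{-\hat R(\xi_1-\eta_1)}\bigr]\Bigr)^n,
\]
where I have used that the $\xi_i$ are i.i.d., the $\eta_i$ are i.i.d., and the two families are mutually independent of each other and of $(N_t)$. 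By independence of $\xi_1$ and $\eta_1$,
\[
\mathbb{E}\bigl[e^{-\hat R(\xi_1-\eta_1)}\bigr]
= \int_0^{+\infty} e^{-\hat R y}\,dF_1(y)\cdot\int_0^{+\infty} e^{\hat R y}\,dF_2(y).
\]
Wait — I must be careful with signs here: $U_h$ contains $-\sum(\xi_i-\eta_i)$, so $e^{-\hat R U_h}$ contributes $e^{+\hat R(\xi_i-\eta_i)} = e^{\hat R\xi_i}e^{-\hat R\eta_i}$, giving the factor $\int_0^{+\infty}e^{\hat R y}\,dF_1(y)\cdot\int_0^{+\infty}e^{-\hat R y}\,dF_2(y)$, exactly the product appearing in~\eqref{eq:3}. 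Summing the Poisson series then yields
\[
\mathbb{E}\bigl[e^{-\hat R U_h}\bigr]
= \exp\!\Bigl\{-\hat R c h + \lambda h\Bigl(\textstyle\int_0^{+\infty}e^{\hat R y}\,dF_1(y)\cdot\int_0^{+\infty}e^{-\hat R y}\,dF_2(y) - 1\Bigr)\Bigr\},
\]
and by the defining relation~\eqref{eq:3} the exponent vanishes, so $\mathbb{E}[e^{-\hat R U_h}]=1$ for all $h\ge 0$. Combined with the conditioning identity above, this gives $\mathbb{E}[V_t(\hat R)\mid\mathfrak{F}_s]=V_s(\hat R)$.

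The main obstacle — really the only point requiring care — is integrability: the martingale property is vacuous unless $\mathbb{E}[|V_t(\hat R)|]=\mathbb{E}[V_t(\hat R)]<\infty$, and the computation above already supplies this, since it shows $\mathbb{E}[V_t(\hat R)]=\mathbb{E}[e^{-\hat R U_t}]=1<\infty$ for every $t\ge 0$; note that by hypothesis the two integrals $\int_0^{+\infty}e^{\hat R y}\,dF_1(y)$ and $\int_0^{+\infty}e^{-\hat R y}\,dF_2(y)\le 1$ are finite (the first is finite precisely because $\hat R$ solves~\eqref{eq:3}), so the Poisson series converges absolutely and Fubini/Tonelli applies in the conditioning step. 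The adaptedness of $(V_t(\hat R))_{t\ge 0}$ to $(\mathfrak{F}_t)_{t\ge 0}$ is immediate since $U_t$ is $\mathfrak{F}_t$-measurable. This completes the verification.
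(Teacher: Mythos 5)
Your proposal is correct and follows essentially the same route as the paper: both verify the martingale property via the independent, stationary increments of the compound Poisson process, compute $\mathbb E[e^{-\hat R U_h}]$ by conditioning on $N_h$ and summing the Poisson series, and invoke \eqref{eq:3} to make the exponent vanish. Your handling of the sign in $\mathbb E[e^{\hat R\xi_i}]\,\mathbb E[e^{-\hat R\eta_i}]$ and the integrability remark match the paper's observations, and your factorization $\mathbb E[V_t(\hat R)\mid\mathfrak F_s]=V_s(\hat R)\,\mathbb E[e^{-\hat R(U_t-U_s)}]$ is, if anything, stated slightly more cleanly than the paper's display.
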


\begin{proof}
For all $R>0$ such that $\mathbb E  [e^{R\xi_i} ]<\infty$, if any,
we have
\begin{equation}\label{eq:4} %
\begin{split} \mathbb E \bigl[V_t(R)\bigr]
&=e^{-cRt}\, \mathbb E \Biggl[\exp \Biggl\{R\sum
_{i=1}^{N_t}(\xi_i-\eta_i)
\Biggr\} \Biggr]
\\
&=e^{-cRt}\,\sum_{j=0}^{\infty}e^{-\lambda t}
\frac{(\lambda t)^j}{j!} \bigl(\mathbb E \bigl[e^{R(\xi_i-\eta_i)} \bigr]
\bigr)^j
\\
&=\exp \bigl\{t \bigl(\lambda\mathbb E \bigl[e^{R(\xi_i-\eta_i)} \bigr] -\lambda-cR
\bigr) \bigr\}. \end{split} %
\end{equation}

If there is $\hat R>0$ such that~\eqref{eq:3} holds, then
$\mathbb E  [e^{\hat R\xi_i} ]<\infty$, and
for all $t_2\ge t_1\ge0$, we have
\begin{equation*}
\begin{split} \mathbb E \bigl[V_{t_2}(\hat R) \,/\,
\mathfrak{F}_{t_1}\bigr] &=\mathbb E \Biggl[\exp \Biggl\{-\hat R
\Biggl(ct_2 -\sum_{i=1}^{N_{t_2}}(
\xi_i-\eta_i) \Biggr) \Biggr\} \, \vphantom{ ( )} \Big/\,
\mathfrak{F}_{t_1} \Biggr]
\\
&=\mathbb E \Biggl[\exp \Biggl\{-\hat R \Biggl(ct_1 -\sum
_{i=1}^{N_{t_1}}(\xi_i-\eta_i)
\Biggr) \Biggr\} \Biggr]
\\
&\quad\times\mathbb E \Biggl[\exp \Biggl\{-\hat R \Biggl(c(t_2-t_1)
-\sum_{i=N_{t_1}}^{N_{t_2}}(\xi_i-
\eta_i) \Biggr) \Biggr\} \, \vphantom{ ( )} \Big/\, \mathfrak{F}_{t_1}
\Biggr]
\\
&=\mathbb E \bigl[V_{t_1}(\hat R)\bigr] \cdot\mathbb E \Biggl[\exp
\Biggl\{-\hat R \Biggl(c(t_2-t_1) -\sum
_{i=1}^{N_{t_2-t_1}}(\xi_i-\eta_i)
\Biggr) \Biggr\} \Biggr]
\\
&=\mathbb E \bigl[V_{t_1}(\hat R)\bigr]. \end{split} %
\end{equation*}
Here we used the fact that
\begin{equation*}
\begin{split} &\mathbb E \Biggl[\exp \Biggl\{-\hat R
\Biggl(c(t_2-t_1) -\sum_{i=1}^{N_{t_2-t_1}}(
\xi_i-\eta_i) \Biggr) \Biggr\} \Biggr]
\\
&\quad
=\exp \bigl\{ (t_2-t_1 ) \bigl(\lambda \mathbb E
\bigl[e^{\hat R(\xi_i-\eta_i)} \bigr]-\lambda-c\hat R \bigr) \bigr\}=1 \end{split}
\end{equation*}
by~\eqref{eq:3} and~\eqref{eq:4}.

Thus, $ (V_t(\hat R) )_{t\ge0}$ is an $(\mathfrak{F}_t)$-martingale,
which is the desired conclusion.
\end{proof}\endgroup

\begin{thm}
\label{thm:2}
If there is $\hat R>0$ such that~\eqref{eq:3} holds, then for all $x\ge
0$, we have
\begin{equation}
\label{eq:5} \psi(x)\le e^{-\hat Rx}.
\end{equation}
\end{thm}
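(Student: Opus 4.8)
The plan is to use the optional stopping theorem applied to the martingale $(V_t(\hat R))_{t\ge 0}$ from Lemma~\ref{lemma:1}, evaluated at the ruin time $\tau(x)$ truncated at a fixed horizon. Fix $x\ge 0$ and $t\ge 0$, and note that $\tau(x)\wedge t$ is a bounded $(\mathfrak F_t)$-stopping time. Since $X_t(x)=x+U_t$, we have $V_{\tau(x)\wedge t}(\hat R)=e^{-\hat R U_{\tau(x)\wedge t}}=e^{\hat R x}e^{-\hat R X_{\tau(x)\wedge t}(x)}$. Applying the optional stopping theorem (valid here because the stopping time is bounded) gives
\[
V_0(\hat R)=1=\mathbb E\bigl[V_{\tau(x)\wedge t}(\hat R)\bigr]
=e^{\hat R x}\,\mathbb E\bigl[e^{-\hat R X_{\tau(x)\wedge t}(x)}\bigr].
\]

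Next I would split the expectation according to whether ruin has occurred by time $t$ or not:
\[
e^{-\hat R x}=\mathbb E\bigl[e^{-\hat R X_{\tau(x)\wedge t}(x)}\bigr]
=\mathbb E\bigl[e^{-\hat R X_{\tau(x)}(x)}\mathbf 1_{\{\tau(x)\le t\}}\bigr]
+\mathbb E\bigl[e^{-\hat R X_t(x)}\mathbf 1_{\{\tau(x)> t\}}\bigr].
\]
The second term is nonnegative, so it can simply be dropped, yielding $e^{-\hat R x}\ge \mathbb E[e^{-\hat R X_{\tau(x)}(x)}\mathbf 1_{\{\tau(x)\le t\}}]$. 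On the event $\{\tau(x)\le t\}$ we have $X_{\tau(x)}(x)<0$ by definition of the ruin time, hence $e^{-\hat R X_{\tau(x)}(x)}\ge 1$ (using $\hat R>0$). Therefore $e^{-\hat R x}\ge \mathbb P[\tau(x)\le t]$.

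Finally I would let $t\to\infty$. The events $\{\tau(x)\le t\}$ increase to $\{\tau(x)<\infty\}$, so by monotone convergence $\mathbb P[\tau(x)\le t]\to \mathbb P[\tau(x)<\infty]=\psi(x)$, giving $\psi(x)\le e^{-\hat R x}$ as claimed.

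The main obstacle is the justification that dropping the second term is legitimate and, more subtly, making sure the optional stopping argument is airtight: one must verify that $(V_t(\hat R))_{t\ge 0}$ really is a martingale (granted by Lemma~\ref{lemma:1}) and that $\tau(x)\wedge t$ is a bounded stopping time so that no integrability side conditions are needed — this is the cleanest route and avoids having to control $\mathbb E[e^{-\hat R X_t(x)}\mathbf 1_{\{\tau(x)>t\}}]$ directly. A minor technical point worth a sentence is that the surplus process has only downward jumps (of size $\xi_i-\eta_i$ when positive), so $X_{\tau(x)}(x)$ may be strictly negative at a jump; in any case the inequality $X_{\tau(x)}(x)<0$ on $\{\tau(x)<\infty\}$ is all that is needed, and the bound $e^{-\hat R X_{\tau(x)}(x)}\ge 1$ follows immediately.
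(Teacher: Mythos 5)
Your proposal is correct and follows essentially the same route as the paper: apply the optional stopping theorem to the martingale $(V_t(\hat R))_{t\ge 0}$ at the bounded stopping time $\tau(x)\wedge t$, drop the nonnegative term on $\{\tau(x)>t\}$, bound the exponential from below by $e^{\hat Rx}$ on the ruin event using $X_{\tau(x)}(x)<0$, and let $t\to\infty$. The only cosmetic difference is that you phrase the bound in terms of $X_{\tau(x)}(x)$ while the paper works directly with $U_{\tau(x)}$; the argument is identical.
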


\begin{proof}
It is easily seen that $\tau(x)$ is an $(\mathfrak{F}_t)$-stopping time.
Hence, $\tau(x)\wedge T$ is a bounded $(\mathfrak{F}_t)$-stopping time
for any fixed $T\ge0$. The process $ (V_t(\hat R) )_{t\ge0}$
is an $(\mathfrak{F}_t)$-martingale by Lemma~\ref{lemma:1}. Moreover,
$ (V_t(\hat R) )_{t\ge0}$ is positive a.s.\ by its definition.
Consequently, applying the optional stopping theorem yields
\begin{equation*}
\begin{split} 1&=V_0(\hat R)=\mathbb E
\bigl[V_{\tau(x)\wedge T}(\hat R)\bigr]
\\
&=\mathbb E \bigl[V_{\tau(x)}(\hat R) \cdot\mathbb I_{\{\tau(x)<T\}}\bigr] +
\mathbb E \bigl[V_T(\hat R) \cdot\mathbb I_{\{\tau(x)\ge T\}}\bigr]
\\
&\ge\mathbb E \bigl[V_{\tau(x)}(\hat R) \cdot\mathbb I_{\{\tau(x)<T\}}\bigr]
\\
&=\mathbb E \Biggl[\exp \Biggl\{-\hat R \Biggl(c\tau(x) -\sum
_{i=1}^{N_{\tau(x)}}(\xi_i-\eta_i)
\Biggr) \Biggr\} \cdot\mathbb I_{\{\tau(x)<T\}} \Biggr]
\\
&\ge e^{\hat Rx} \cdot\mathbb P \bigl[\tau(x)<T\bigr], \end{split}
\end{equation*}
where $\mathbb I_{\{\cdot\}}$ is the indicator of an event. This gives
\begin{equation}
\label{eq:6} \mathbb P \bigl[\tau(x)<T\bigr]\le e^{-\hat Rx}
\end{equation}
for all $T\ge0$. Letting $T\to\infty$ in~\eqref{eq:6} yields
\[
\mathbb P \bigl[\tau(x)<\infty\bigr]\le e^{-\hat Rx},
\]
which is our assertion.
\end{proof}

\begin{ex}
\label{ex:1}
Let the random variables $\xi_i$ and $\eta_i$, $i\ge1$,
be exponentially distributed with means $\mu_1$ and $\mu_2$, respectively.
Then~\eqref{eq:3} can be rewritten as
\[
\lambda \biggl(\frac{1}{(1-\mu_1 \hat R)(1+\mu_2 \hat R)} -1 \biggr)=c\hat R,
\]
where $\hat R\in(0,1/\mu_1)$. This condition is equivalent to
\begin{equation}
\label{eq:7} c\mu_1\mu_2\hat R^3+(\lambda
\mu_1\mu_2+c\mu_1-c\mu_2)\hat
R^2 -(c-\lambda\mu_1+\lambda\mu_2)\hat R=0.
\end{equation}

If $c-\lambda\mu_1+\lambda\mu_2 >0$, then
\[
c^2\bigl(\mu_1^2+\mu_2^2
\bigr)+\lambda^2\mu_1^2\mu_2^2+2c
\mu_1\mu_2(c-\lambda\mu _1+\lambda
\mu_2)>0.
\]
So there are three real solutions to~\eqref{eq:7}. They are
\[
\hat R_1=0,
\]
\[
\hat R_2=-\frac{\lambda\mu_1\mu_2+c\mu_1-c\mu_2
-\sqrt{A(c,\lambda,\mu_1,\mu_2)}}{
2c\mu_1\mu_2},
\]
\[
\hat R_3=-\frac{\lambda\mu_1\mu_2+c\mu_1-c\mu_2
+\sqrt{A(c,\lambda,\mu_1,\mu_2)}}{
2c\mu_1\mu_2},
\]
where
\[
A(c,\lambda,\mu_1,\mu_2) =c^2\bigl(
\mu_1^2+\mu_2^2\bigr)+
\lambda^2\mu_1^2\mu_2^2+2c
\mu_1\mu_2(c-\lambda\mu _1+\lambda
\mu_2).
\]

Furthermore, it is easy to check that, in this case,
\[
|\lambda\mu_1\mu_2+c\mu_1-c
\mu_2| <\sqrt{A(c,\lambda,\mu_1,\mu_2)}.
\]
From this we conclude that $\hat R_2>0$ and $\hat R_3<0$.
Since
\[
A(c,\lambda,\mu_1,\mu_2) <(\lambda\mu_1
\mu_2+c\mu_1+c\mu_2)^2,
\]
we have
\[
\hat R_2<\frac{(\lambda\mu_1\mu_2+c\mu_1+c\mu_2)-(\lambda\mu_1\mu_2+c\mu
_1-c\mu_2)}{
2c\mu_1\mu_2}<\frac{1}{\mu_1}.
\]
Hence, $\hat R_2$ is a unique positive solution to~\eqref{eq:7}, and
an exponential bound~\eqref{eq:5} can be rewritten as follows:
\begin{equation}
\label{eq:8} \psi(x)\le e^{-\hat R_2 x}.
\end{equation}
Comparing~\eqref{eq:8} with~\eqref{eq:2}, we see that the exponential bound
and the analytic expression for the ruin probability differ
in a constant multiplier only.

If $c-\lambda\mu_1+\lambda\mu_2 \le0$, then $\mu_1>\mu_2$,
which gives $\lambda\mu_1\mu_2+c\mu_1-c\mu_2>0$.
Let $\hat R_2$ and $\hat R_3$ be two nonzero solutions to~\eqref{eq:7}.
Applying Vieta's formulas yields $\hat R_2 +\hat R_3<0$ and
$\hat R_2 \hat R_3>0$. Consequently, if $\hat R_2$ and $\hat R_3$
are real, they are negative. Thus, \eqref{eq:7} has no positive
solution, and Theorem~\ref{thm:2} does not give us an exponential bound
for the ruin probability.
Indeed, in this case, $\psi(x)=1$ for all $x\ge0$
(see \cite[Lemma~2.1]{Mishura_Ragulina_Stroyev}).
\end{ex}

\section{Analogue to the De Vylder approximation}
\label{se:3}

To construct an analogue to the De Vylder approximation, we replace the process
$ (U_t )_{t\ge0}$ with a process $ (\tilde U_t )_{t\ge
0}$ with
exponentially distributed claim sizes such that
\begin{equation}
\label{eq:9} \mathbb E \bigl[U_t^k\bigr]=\mathbb E
\bigl[\tilde U_t^k\bigr], \quad k=1,2,3.
\end{equation}

Since the process $ (\tilde U_t )_{t\ge0}$ in this risk model
is determined
by the four parameters $(\tilde c,\tilde\lambda,\tilde\mu_1,\tilde\mu_2)$
in contrast to the classical risk model, where it is determined by
three parameters,
we use the additional condition
\begin{equation}
\label{eq:10} \frac{\mu_1}{\mu_2}=\frac{\tilde\mu_1}{\tilde\mu_2}.
\end{equation}
Note that we could have used the condition $\mathbb E [U_t^4]=\mathbb E
[\tilde U_t^4]$
instead of~\eqref{eq:10}, but it would have led to tedious calculations and
solving polynomial equations of higher degree.

Let $(\tilde\xi_i)_{i\ge1}$ be a sequence of i.i.d. random variables
exponentially distributed with mean $\tilde\mu_1$. Similarly, let
$(\tilde\eta_i)_{i\ge1}$ be a sequence of i.i.d. random variables
exponentially distributed with mean $\tilde\mu_2$.
An easy computation shows that
\begin{equation}
\label{eq:11} \mathbb E \bigl[\tilde\xi_i^k\bigr]=k!
\tilde\mu_1^k \quad\text{and} \quad \mathbb E \bigl[
\tilde\eta_i^k\bigr]=k!\tilde\mu_2^k.
\end{equation}

Let $\mathbb E  [\xi_i^3 ]<\infty$ and $\mathbb E  [\eta
_i^3 ]<\infty$.
Then we have
\[
\mathbb E [U_t]=ct-\mathbb E \Biggl[\sum
_{i=1}^{N_t}(\xi_i-\eta_i)
\Biggr] =ct-\lambda t\, \mathbb E [\xi_i-\eta_i ],
\]%
\begin{equation*}
\begin{split} \mathbb E \bigl[U_t^2
\bigr]&=(ct)^2-2ct\, \mathbb E \Biggl[\sum
_{i=1}^{N_t}(\xi _i-\eta_i)
\Biggr] +\mathbb E \Biggl[ \Biggl(\sum_{i=1}^{N_t}(
\xi_i-\eta_i) \Biggr)^2 \Biggr]
\\
&=(ct)^2-2ct\cdot\lambda t\, \mathbb E [\xi_i-
\eta_i ] +\lambda t\, \mathbb E \bigl[(\xi_i-
\eta_i)^2 \bigr] +(\lambda t)^2 \bigl(\mathbb
E [\xi_i-\eta_i ] \bigr)^2
\\
&=\lambda t\, \mathbb E \bigl[(\xi_i-\eta_i)^2
\bigr] + \bigl(ct-\lambda t\, \mathbb E [\xi_i-\eta_i ]
\bigr)^2
\\
&=\lambda t\, \mathbb E \bigl[(\xi_i-\eta_i)^2
\bigr] + \bigl(\mathbb E [U_t] \bigr)^2, \end{split}
\end{equation*}
\begin{equation*}
\begin{split} \mathbb E \bigl[U_t^3
\bigr]&=(ct)^3-3(ct)^2 \, \mathbb E \Biggl[\sum
_{i=1}^{N_t}(\xi_i-\eta_i)
\Biggr]
\\
&\quad+3ct\, \mathbb E \Biggl[ \Biggl(\sum_{i=1}^{N_t}(
\xi_i-\eta _i) \Biggr)^2 \Biggr] -\mathbb E
\Biggl[ \Biggl(\sum_{i=1}^{N_t}(
\xi_i-\eta_i) \Biggr)^3 \Biggr]
\\
&=(ct)^3-3(ct)^2\cdot\lambda t\, \mathbb E [
\xi_i-\eta_i ] +3ct (\lambda t\, \mathbb E \bigl[(
\xi_i-\eta_i)^2 \bigr]
\\
&\quad+(\lambda t)^2 \bigl(\mathbb E [\xi_i-
\eta_i ] \bigr)^2 -\lambda t\, \mathbb E \bigl[(
\xi_i-\eta_i)^3 \bigr] -3(\lambda
t)^2\, \mathbb E \bigl[(\xi_i-\eta_i)^2
\bigr]
\\
&\quad+\lambda t\, \mathbb E \bigl[(\xi_i-\eta_i)^2
\bigr]\, \mathbb E \bigl[(\xi_i-\eta_i)^2
\bigr] -(\lambda t)^3 \bigl(\mathbb E [\xi_i-
\eta_i ] \bigr)^3
\\
&=-\lambda t\, \mathbb E \bigl[(\xi_i-\eta_i)^3
\bigr] + \bigl(ct-\lambda t\, \mathbb E [\xi_i-\eta_i ]
\bigr)^3
\\
&\quad+3\lambda t\, \mathbb E \bigl[(\xi_i-\eta_i)^2
\bigr] \bigl(ct-\lambda t\, \mathbb E [\xi_i-\eta_i ]
\bigr)
\\
&=-\lambda t\, \mathbb E \bigl[(\xi_i-\eta_i)^3
\bigr] + \bigl(\mathbb E [U_t] \bigr)^3 +3 \bigl(\mathbb
E \bigl[U_t^2\bigr]- \bigl(\mathbb E [U_t]
\bigr)^2 \bigr)\,\mathbb E [U_t]. \end{split} %
\end{equation*}

Applying similar arguments to the process $ (\tilde U_t )_{t\ge0}$,
we conclude that~\eqref{eq:9} is equivalent to
\begin{equation}
\label{eq:12}
\left\{ %
\begin{aligned}
ct-\lambda t\, \mathbb E [ \xi_i-\eta_i ] &=\tilde ct-\tilde\lambda t\, \mathbb E [
\tilde\xi_i-\tilde\eta _i ],
\\
\lambda t\, \mathbb E \bigl[(\xi_i-\eta_i)^2
\bigr] &=\tilde\lambda t\, \mathbb E \bigl[(\tilde\xi_i-\tilde
\eta_i)^2 \bigr],
\\
-\lambda t\, \mathbb E \bigl[(\xi_i-\eta_i)^3
\bigr] &=-\tilde\lambda t\, \mathbb E \bigl[(\tilde\xi_i-\tilde
\eta_i)^3 \bigr], \end{aligned}\right. %
\end{equation}

By~\eqref{eq:11} we can rewrite~\eqref{eq:12} as
\begin{equation}
\label{eq:13} \left\{ %
\begin{aligned} c-\lambda(\mu_1-
\mu_2)&=\tilde c-\tilde\lambda(\tilde\mu_1-\tilde\mu
_2),
\\
\lambda\,\mathbb E \bigl[(\xi_i-\eta_i)^2
\bigr] &=2\tilde\lambda\bigl(\tilde\mu_1^2-\tilde
\mu_1\tilde\mu_2+\tilde\mu_2^2
\bigr),
\\
\lambda\,\mathbb E \bigl[(\xi_i-\eta_i)^3
\bigr] &=6\tilde\lambda\bigl(\tilde\mu_1^3-\tilde
\mu_1^2\tilde\mu_2 +\tilde\mu_1
\tilde\mu_2^2-\tilde\mu_2^3\bigr).
\end{aligned} \right.%
\end{equation}

Substituting $\tilde\mu_2=\mu_2\tilde\mu_1/\mu_1$ into the second and
third equations
of system~\eqref{eq:13} yields
\begin{equation}
\label{eq:14} \lambda\,\mathbb E \bigl[(\xi_i-
\eta_i)^2 \bigr] =2\tilde\lambda\tilde
\mu_1^2 \biggl(1-\frac{\mu_2}{\mu_1}+\frac{\mu
_2^2}{\mu_1^2}
\biggr),
\end{equation}
\begin{equation}
\label{eq:15} \lambda\,\mathbb E \bigl[(\xi_i-
\eta_i)^3 \bigr] =6\tilde\lambda\tilde
\mu_1^3 \biggl(1-\frac{\mu_2}{\mu_1}+\frac{\mu
_2^2}{\mu_1^2} -
\frac{\mu_2^3}{\mu_1^3} \biggr).
\end{equation}

Dividing~\eqref{eq:15} by~\eqref{eq:14} gives
\begin{equation}
\label{eq:16} \tilde\mu_1=\frac{\mu_1 (\mu_1^2-\mu_1\mu_2+\mu_2^2 )\,
\mathbb E  [(\xi_i-\eta_i)^3 ]}{
3 (\mu_1^3-\mu_1^2\mu_2+\mu_1\mu_2^2-\mu_2^3 )\,
\mathbb E  [(\xi_i-\eta_i)^2 ]}.
\end{equation}

Consequently, we have
\begin{equation}
\label{eq:17} \tilde\mu_2=\frac{\mu_2 (\mu_1^2-\mu_1\mu_2+\mu_2^2 )\,
\mathbb E  [(\xi_i-\eta_i)^3 ]}{
3 (\mu_1^3-\mu_1^2\mu_2+\mu_1\mu_2^2-\mu_2^3 )\,
\mathbb E  [(\xi_i-\eta_i)^2 ]}.
\end{equation}

Substituting~\eqref{eq:16} into~\eqref{eq:14}, we get
\begin{equation}
\label{eq:18} \tilde\lambda=\frac{9\lambda
 (\mu_1^3-\mu_1^2\mu_2+\mu_1\mu_2^2-\mu_2^3 )^2\,
 (\mathbb E  [(\xi_i-\eta_i)^2 ] )^3}{
2 (\mu_1^2-\mu_1\mu_2+\mu_2^2 )^3\,
 (\mathbb E  [(\xi_i-\eta_i)^3 ] )^2}.
\end{equation}

Substituting~\eqref{eq:16}--\eqref{eq:18} into the first equation
of system~\eqref{eq:13}, we obtain
\begin{equation}
\label{eq:19} \tilde c =c-\lambda (\mu_1-\mu_2 )
\biggl(1-\frac{3 (\mu_1^3-\mu_1^2\mu_2+\mu_1\mu_2^2-\mu_2^3 )\,
 (\mathbb E  [(\xi_i-\eta_i)^2 ] )^2}{
2 (\mu_1^2-\mu_1\mu_2+\mu_2^2 )^2\,
\mathbb E  [(\xi_i-\eta_i)^3 ]} \biggr).
\end{equation}

Note that since $F_1(y)$ and $F_2(y)$ are known, it is easy to find
$\mathbb E  [(\xi_i-\eta_i)^2 ]$ and $\mathbb E  [(\xi
_i-\eta_i)^3 ]$
if $\mathbb E  [\xi_i^3 ]<\infty$ and $\mathbb E  [\eta
_i^3 ]<\infty$.

By~\eqref{eq:16} and~\eqref{eq:17}, $\tilde\mu_1$ and $\tilde\mu_2$ are
positive, provided that
\begin{equation}
\label{eq:20} \bigl(\mu_1^3-\mu_1^2
\mu_2+\mu_1\mu_2^2-
\mu_2^3 \bigr)\, \mathbb E \bigl[(\xi_i-
\eta_i)^3 \bigr]>0.
\end{equation}
If~\eqref{eq:20} holds, then $\mu_1 \ne\mu_2$. So $\tilde\lambda$ is
also positive.
Moreover, $\tilde c$ is positive, provided that
\begin{equation}
\label{eq:21} c-\lambda (\mu_1-\mu_2 ) \biggl(1-
\frac{3 (\mu_1^3-\mu_1^2\mu_2+\mu_1\mu_2^2-\mu_2^3 )\,
 (\mathbb E  [(\xi_i-\eta_i)^2 ] )^2}{
2 (\mu_1^2-\mu_1\mu_2+\mu_2^2 )^2\,
\mathbb E  [(\xi_i-\eta_i)^3 ]} \biggr)>0.
\end{equation}

Thus, we get the following result.

\begin{prop}[\xch{A}{a}n analogue to the De Vylder approximation]
\label{prop:1}
Let the surplus process $ (X_t(x) )_{t\ge0}$ follow~\eqref{eq:1}
under the above assumptions, $c-\lambda\mu_1+\lambda\mu_2 >0$,\allowbreak
$\mathbb E  [\xi_i^3 ]<\infty$,
$\mathbb E  [\eta_i^3 ]<\infty$,
and let conditions~\eqref{eq:20} and~\eqref{eq:21} hold.
Then the ruin probability is approximately equal to
\[
\psi_{DV}(x)=\frac{\tilde\lambda\tilde\mu_1(\tilde\alpha\tilde\mu_2-1)}{
(\tilde c \tilde\alpha-\tilde\lambda)(1-\tilde\alpha\tilde\mu_2)
(\tilde\mu_1+\tilde\mu_2)+\tilde\lambda\tilde\mu_2}\,e^{\tilde\alpha x}
\]
for all $x\ge0$, where\vadjust{\eject}
\[
\tilde\alpha=\frac{\tilde\lambda\tilde\mu_1\tilde\mu_2+\tilde c \tilde
\mu_1-\tilde c \tilde\mu_2
-\sqrt{\tilde c^2(\tilde\mu_1^2+\tilde\mu_2^2)+\tilde\lambda^2\tilde\mu
_1^2\tilde\mu_2^2
+2\tilde c \tilde\mu_1\tilde\mu_2(\tilde c-\tilde\lambda\tilde\mu
_1+\tilde\lambda\tilde\mu_2)}}{
2\tilde c \tilde\mu_1\tilde\mu_2}
\]
and the parameters $(\tilde c,\tilde\lambda,\tilde\mu_1,\tilde\mu_2)$
are defined by~\eqref{eq:16}--\eqref{eq:19}.
\end{prop}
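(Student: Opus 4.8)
The plan is to reduce the statement to Theorem~\ref{thm:1} applied to the auxiliary risk model whose parameters are $(\tilde c,\tilde\lambda,\tilde\mu_1,\tilde\mu_2)$. All the computational work of matching the first three moments of $U_t$ and $\tilde U_t$ has already been carried out above: the system~\eqref{eq:9}, together with the scaling condition~\eqref{eq:10}, was transformed through~\eqref{eq:12}--\eqref{eq:15} into the explicit formulas~\eqref{eq:16}--\eqref{eq:19}, so I would simply invoke those. The first step proper is to check that these four numbers genuinely define a risk model of the type considered in Section~\ref{se:1} with exponentially distributed claim sizes and additional funds. Condition~\eqref{eq:20} forces $\mu_1\ne\mu_2$ and makes the right-hand sides of~\eqref{eq:16} and~\eqref{eq:17} strictly positive, so $\tilde\mu_1>0$ and $\tilde\mu_2>0$; then $\tilde\lambda>0$ by~\eqref{eq:18}, since every factor there is a square or a cube of a nonzero quantity; and $\tilde c>0$ is precisely the hypothesis~\eqref{eq:21}.

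The second step is to verify the net-profit inequality $\tilde c-\tilde\lambda\tilde\mu_1+\tilde\lambda\tilde\mu_2>0$ for the auxiliary model, which is the only place where the assumption $c-\lambda\mu_1+\lambda\mu_2>0$ is used. The first equation of system~\eqref{eq:13} reads $c-\lambda(\mu_1-\mu_2)=\tilde c-\tilde\lambda(\tilde\mu_1-\tilde\mu_2)$, so positivity of the left-hand side transfers verbatim to the tilded parameters. Hence Theorem~\ref{thm:1}, applied with $(c,\lambda,\mu_1,\mu_2)$ replaced by $(\tilde c,\tilde\lambda,\tilde\mu_1,\tilde\mu_2)$, produces a closed-form survival probability
\[
\tilde\varphi(x)=1+\frac{\tilde\lambda\tilde\mu_1(1-\tilde\alpha\tilde\mu_2)}{(\tilde c\tilde\alpha-\tilde\lambda)(1-\tilde\alpha\tilde\mu_2)(\tilde\mu_1+\tilde\mu_2)+\tilde\lambda\tilde\mu_2}\,e^{\tilde\alpha x},
\]
with $\tilde\alpha$ the negative root exhibited in the statement, and by Remark~\ref{remark:1} this $\tilde\varphi$ has all the natural monotonicity and boundedness properties of a survival probability.

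The third step is purely algebraic: put $\psi_{DV}(x)=\tilde\psi(x):=1-\tilde\varphi(x)$. Subtracting the above display from $1$ removes the leading $1$ and reverses the sign of the fraction, which turns $1-\tilde\alpha\tilde\mu_2$ in the numerator into $\tilde\alpha\tilde\mu_2-1$ and gives exactly the expression claimed for $\psi_{DV}(x)$. The only remaining ingredient is the meaning of the word ``approximately'': by construction the compound parts $U_t$ and $\tilde U_t$ share their first three moments for every $t\ge0$, so one replaces the original ruin probability $\psi(x)$ by the ruin probability $\tilde\psi(x)$ of the simpler model, exactly as in the classical De~Vylder approximation.

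I do not expect a serious obstacle here: the substance is the derivation of~\eqref{eq:16}--\eqref{eq:19} already given before the statement, plus the two short verifications above. If anything needs care, it is bookkeeping --- keeping straight which of~\eqref{eq:20} and~\eqref{eq:21} guarantees which parameter, and being explicit that, as in the classical setting, no bound on the error of the approximation is being asserted.
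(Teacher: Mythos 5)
Your proposal matches the paper's own argument: the paper derives \eqref{eq:16}--\eqref{eq:19} from moment matching, uses \eqref{eq:20} and \eqref{eq:21} to secure positivity of $(\tilde c,\tilde\lambda,\tilde\mu_1,\tilde\mu_2)$, transfers the net-profit condition via the first equation of \eqref{eq:13} (as recorded in Remark~\ref{remark:2}), and then invokes Theorem~\ref{thm:1} with $\psi_{DV}=1-\tilde\varphi$. This is exactly your route, so the proposal is correct and essentially identical to the paper's reasoning.
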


\begin{remark}
\label{remark:2}
Note that $\tilde\alpha<0$ and
\[
\frac{\tilde\lambda\tilde\mu_1(\tilde\alpha\tilde\mu_2-1)}{
(\tilde c \tilde\alpha-\tilde\lambda)(1-\tilde\alpha\tilde\mu_2)
(\tilde\mu_1+\tilde\mu_2)+\tilde\lambda\tilde\mu_2}>0
\]
in Proposition~\ref{prop:1}.
Indeed, the parameters $(\tilde c,\tilde\lambda,\tilde\mu_1,\tilde\mu_2)$
are positive. Moreover, since $c-\lambda\mu_1+\lambda\mu_2 >0$, we have
$\tilde c-\tilde\lambda\tilde\mu_1 +\tilde\lambda\tilde\mu_2 >0$
by the first equation of system~\eqref{eq:13}.
Hence, Theorem~\ref{thm:1} and Remark~\ref{remark:1} give us the
desired conclusion.
\end{remark}

\begin{remark}
\label{remark:3}
If claim sizes and additional funds are exponentially distributed, then
it is easily seen from~\eqref{eq:16}--\eqref{eq:19} that $\psi(x)=\psi_{DV}(x)$.
\end{remark}

\section{Statistical estimate obtained by the Monte Carlo method}
\label{se:4}

Let $N$ be the total number of simulations of the surplus process $X_t(x)$,
and let $\hat\psi(x)$ be the corresponding statistical estimate obtained
by the Monte Carlo method. To get it, we divide the number of simulations
that lead to the ruin by the total number of simulations.

\begin{prop}
\label{prop:2}
Let the surplus process $ (X_t(x) )_{t\ge0}$ follow~\eqref{eq:1}
under the above assumptions. Then for any $\varepsilon>0$, we have
\begin{equation}
\label{eq:22} \mathbb P \bigl[ \bigl|\psi(x)-\hat\psi(x) \bigr|>\varepsilon \bigr]
\le2e^{-2\varepsilon^2 N}.
\end{equation}
\end{prop}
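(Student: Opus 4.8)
The plan is to recognize $\hat\psi(x)$ as a sample mean of independent Bernoulli random variables and to apply a Hoeffding-type concentration inequality. First I would fix notation: for $j=1,\dots,N$, let $\mathbb{I}_j$ be the indicator of the event that the $j$th simulated trajectory of the surplus process leads to ruin. Since the simulations are independent and each reproduces the law of $ (X_t(x) )_{t\ge0}$, the variables $\mathbb{I}_1,\dots,\mathbb{I}_N$ are i.i.d.\ with $\mathbb P[\mathbb{I}_j=1]=\psi(x)$ and $\mathbb P[\mathbb{I}_j=0]=1-\psi(x)$, so $\mathbb E[\mathbb{I}_j]=\psi(x)$. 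By construction $\hat\psi(x)=N^{-1}\sum_{j=1}^N\mathbb{I}_j$, hence $\hat\psi(x)$ is an unbiased estimator of $\psi(x)$, and~\eqref{eq:22} becomes a statement about the deviation of a normalized sum of bounded i.i.d.\ summands from its mean.

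Next I would control the upper tail $\mathbb P[\hat\psi(x)-\psi(x)>\varepsilon]$ by the exponential Chebyshev (Chernoff) bound: for every $h>0$, Markov's inequality applied to $e^{h\sum_{j=1}^N(\mathbb{I}_j-\psi(x))}$ together with independence gives
\[
\mathbb P\bigl[\hat\psi(x)-\psi(x)>\varepsilon\bigr]\le e^{-h\varepsilon N}\,\mathbb E\Bigl[e^{h\sum_{j=1}^N(\mathbb{I}_j-\psi(x))}\Bigr]=e^{-h\varepsilon N}\bigl(\mathbb E\bigl[e^{h(\mathbb{I}_1-\psi(x))}\bigr]\bigr)^N.
\]
The centred variable $\mathbb{I}_1-\psi(x)$ takes values in the interval $[-\psi(x),1-\psi(x)]$, which has length $1$, and has zero mean, so Hoeffding's lemma yields $\mathbb E[e^{h(\mathbb{I}_1-\psi(x))}]\le e^{h^2/8}$. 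Substituting and minimizing the resulting bound $e^{-h\varepsilon N+Nh^2/8}$ over $h$ (the minimum is attained at $h=4\varepsilon$) gives $\mathbb P[\hat\psi(x)-\psi(x)>\varepsilon]\le e^{-2\varepsilon^2 N}$. Repeating the argument with $-(\mathbb{I}_j-\psi(x))$ in place of $\mathbb{I}_j-\psi(x)$ produces the matching bound $\mathbb P[\hat\psi(x)-\psi(x)<-\varepsilon]\le e^{-2\varepsilon^2 N}$ for the lower tail, and a union bound combines the two into~\eqref{eq:22}.

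The only genuinely nonroutine ingredient is Hoeffding's lemma, namely the inequality $\mathbb E[e^{hY}]\le e^{h^2(b-a)^2/8}$ for a mean-zero random variable $Y$ supported on $[a,b]$; I would prove it by using convexity of the exponential to write $e^{hy}\le\frac{b-y}{b-a}e^{ha}+\frac{y-a}{b-a}e^{hb}$ for $y\in[a,b]$, taking expectations to obtain $\mathbb E[e^{hY}]\le e^{\varphi(h)}$ with $\varphi(h)=\log\bigl(\tfrac{b}{b-a}e^{ha}-\tfrac{a}{b-a}e^{hb}\bigr)$, and then checking $\varphi(0)=\varphi'(0)=0$ and $\varphi''(h)\le(b-a)^2/4$, so that Taylor's formula gives $\varphi(h)\le h^2(b-a)^2/8$. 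Everything else is bookkeeping, and there is no real obstacle: the structure of the estimator is exactly that covered by classical Hoeffding-type bounds, with the summands confined to $[0,1]$, which is what produces the clean constant $2e^{-2\varepsilon^2 N}$.
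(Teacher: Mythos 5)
Your proof is correct and follows exactly the route the paper takes: the paper simply observes that $\hat\psi(x)$ is the sample mean of $N$ i.i.d.\ indicators with mean $\psi(x)$ and cites Hoeffding's inequality, which gives~\eqref{eq:22} immediately. You have merely unpacked that citation by reproducing the standard Chernoff--Hoeffding argument (including the proof of Hoeffding's lemma), so the substance is identical.
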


The assertion of Proposition~\ref{prop:2} follows immediately from
Hoeffding's inequality (see~\cite{Hoeffding}).

\begin{remark}
\label{remark:4}
Formula~\eqref{eq:22} relates the accuracy and reliability of the approximation
of the ruin probability by its statistical estimate obtained by the
Monte Carlo method.
It enables us to find the number of simulations $N$, which is necessary
in order
to calculate the ruin probability with the required accuracy and reliability.
An obvious shortcoming of the Monte Carlo method is a too large number
of simulations $N$.
In all examples in Section~\ref{se:5}, we assume that $\varepsilon
=0{.}001$ and
$2e^{-2\varepsilon^2 N}=0{.}001$. Consequently, $N=3\,800\,452$.
\end{remark}

\section{Comparison of results}
\label{se:5}

\subsection{Erlang distributions for claim sizes and additional funds}

Let the probability density functions of $\xi_i$ and $\eta_i$ be
\[
f_1(y)=\frac{k_1^{k_1}y^{k_1-1}e^{-k_1y/\mu_1}}{\mu_1^{k_1}(k_1-1)!} \quad\text{and} \quad f_2(y)=
\frac{k_2^{k_2}y^{k_2-1}e^{-k_2y/\mu_2}}{\mu_2^{k_2}(k_2-1)!}
\]
for $y\ge0$, respectively, where $k_1$ and $k_2$ are positive integers.

In what follows, $h_1(R)$ and $h_2(R)$, where $R\ge0$, denote the moment
generating functions of $\xi_i$ and $\eta_i$, respectively, that is,
\[
h_1(R)=\mathbb E \bigl[e^{R\xi_i} \bigr] \quad\text{and}
\quad h_2(R)=\mathbb E \bigl[e^{R\eta_i} \bigr].
\]

An easy computation shows that
\[
h_1(R)=\int_0^{+\infty}e^{Ry}
\,dF_1(y) = \biggl(\frac{k_1}{k_1-\mu_1 R} \biggr)^{k_1}, \quad0
\le R< \frac{k_1}{\mu_1},
\]
\[
h_2(R)=\int_0^{+\infty}e^{Ry}
\,dF_2(y) = \biggl(\frac{k_2}{k_2-\mu_2 R} \biggr)^{k_2}, \quad0
\le R< \frac{k_2}{\mu_2}.
\]

Moreover, for all $R\ge0$, we have
\[
\int_0^{+\infty}e^{-Ry}\,dF_2(y)
= \biggl(\frac{k_2}{k_2+\mu_2 R} \biggr)^{k_2}.
\]

Thus, condition~\eqref{eq:3} can be rewritten as
\begin{equation}
\label{eq:23} \lambda\, \biggl(\frac{k_1}{k_1-\mu_1 \hat R} \biggr)^{k_1}\,
\biggl(\frac{k_2}{k_2+\mu_2 \hat R} \biggr)^{k_2} =\lambda+c\hat R,
\end{equation}
where $0<\hat R<k_1/\mu_1$. Furthermore, we have
\[
\mathbb E [\xi_i ]=h'_1(0)=
\mu_1, \qquad\mathbb E [\eta_i ]=h'_2(0)=
\mu_2,
\]
\[
\mathbb E \bigl[\xi_i^2 \bigr]=h''_1(0)=
\frac{(k_1+1)\mu_1^2}{k_1}, \qquad\mathbb E \bigl[\eta_i^2
\bigr]=h''_2(0)=\frac{(k_2+1)\mu_2^2}{k_2},
\]
\[
\mathbb E \bigl[\xi_i^3 \bigr]=h'''_1(0)=
\frac{(k_1+1)(k_1+2)\mu_1^3}{k_1^2}, \qquad\mathbb E \bigl[\eta_i^3
\bigr]=h'''_2(0)=
\frac{(k_2+1)(k_2+2)\mu
_2^3}{k_2^2}.
\]

Hence, we get
\begin{equation*}
\begin{split} \mathbb E \bigl[(\xi_i-
\eta_i)^2 \bigr] &=\mathbb E \bigl[\xi_i^2
\bigr]-2\mathbb E [\xi_i ]\mathbb E [\eta_i ] +\mathbb
E \bigl[\eta_i^2 \bigr]
\\
&=\frac{(k_1+1)\mu_1^2}{k_1}-2\mu_1\mu_2+\frac{(k_2+1)\mu_2^2}{k_2},
\end{split} %
\end{equation*}
\begin{equation*}
\begin{split} \mathbb E \bigl[(\xi_i-
\eta_i)^3 \bigr] &=\mathbb E \bigl[\xi_i^3
\bigr]-3\mathbb E \bigl[\xi_i^2 \bigr]\mathbb E [
\eta_i ] +3\mathbb E [\xi_i ]\mathbb E \bigl[
\eta_i^2 \bigr]-\mathbb E \bigl[\eta_i^3
\bigr]
\\
&=\frac{(k_1+1)(k_1+2)\mu_1^3}{k_1^2}-\frac{3(k_1+1)\mu_1^2\mu_2}{k_1}
\\
&\quad+\frac{3(k_2+1)\mu_2^2\mu_1}{k_2}-\frac{(k_2+1)(k_2+2)\mu_2^3}{k_2^2}. \end{split} %
\end{equation*}
Substituting $\mathbb E  [(\xi_i-\eta_i)^2 ]$ and $\mathbb E
 [(\xi_i-\eta_i)^3 ]$
into~\eqref{eq:16}--\eqref{eq:19}, we obtain the parameters
$(\tilde c,\tilde\lambda,\tilde\mu_1,\tilde\mu_2)$.

\begin{ex}
\label{ex:2}
Let $c=10$, $\lambda=4$, $\mu_1=2$, $\mu_2=0{.}5$, $k_1=3$, $k_2=2$. Then
$\hat R \approx0{.}349093$, which may not be an unique positive
solution to~\eqref{eq:23},
and
\[
\psi_{DV}(x)=0{.}612268\,e^{-0{.}332472\,x}.
\]

The results of computations are given in Table~\ref{table:1}.

\begin{table}
\caption{Results of computations: Erlang distributions for claim sizes
and additional funds}\label{table:1}
\begin{tabular*}{\textwidth}{@{\extracolsep{\fill
}}D{.}{.}{2.0}D{.}{.}{1.6}D{.}{.}{1.6}D{.}{.}{3.13}D{.}{.}{1.6}D{.}{.}{2.13}@{}}
\hline
\multicolumn{1}{@{}l}{$x$} &
\multicolumn{1}{l}{$\hat\psi(x)$} &
\multicolumn{1}{l}{$\psi_{DV}(x)$} &
\multicolumn{1}{l}{$ \bigl(\frac{\psi_{DV}(x)}{\hat\psi(x)}-1\bigr)\cdot100\%$} &
\multicolumn{1}{l}{$e^{-\hat Rx}$} &
\multicolumn{1}{l@{}}{$ \bigl(\frac{e^{-\hat Rx}}{\hat\psi(x)}-1\bigr)\cdot100\%$} \\
\hline
\rule{0pt}{9pt}0 &0.634149 &0.612268 &-3.45\% &1.000000 &57.69\%\\
1 &0.492768 &0.439087 &-10.89\% &0.705327 &43.14\%\\
2 &0.355769 &0.314891 &-11.49\% &0.497487 &39.83\%\\
5 &0.137737 &0.116142 &-15.68\% &0.174564 &26.74\%\\
10 & 0.023224 &0.022031 &-5.14\% &0.030473 &31.21\%\\
\hline
\end{tabular*}
\end{table}
\end{ex}

\subsection{Hyperexponential distributions for claim sizes and
additional funds}

Let
\[
F_1(y)=p_{1,1}F_{1,1}(y)+p_{1,2}F_{1,2}(y)+
\cdots+p_{1,k_1}F_{1,k_1}(y), \quad y\ge0,
\]
where $k_1\ge1$, $p_{1,j}>0$, $\sum_{j=1}^{k_1}p_{1,j}=1$,
$\sum_{j=1}^{k_1}p_{1,j}\,\mu_{1,j}=\mu_1$, and $F_{1,j}$ is the c.d.f.
of the exponential distribution with mean $\mu_{1,j}$;
\[
F_2(y)=p_{2,1}F_{2,1}(y)+p_{2,2}F_{2,2}(y)+
\cdots+p_{2,k_2}F_{2,k_2}(y), \quad y\ge0,
\]
where $k_2\ge1$, $p_{2,j}>0$, $\sum_{j=1}^{k_2}p_{2,j}=1$,
$\sum_{j=1}^{k_2}p_{2,j}\,\mu_{2,j}=\mu_2$, and $F_{2,j}$ is the c.d.f.
of the exponential distribution with mean $\mu_{2,j}$.

It is easy to check that
\[
h_1(R)=\sum_{j=1}^{k_1}
\frac{p_{1,j}}{1-\mu_{1,j}R}, \quad0\le R< \min \biggl\{\frac{1}{\mu_{1,1}},
\frac{1}{\mu_{1,2}},\dots ,\frac{1}{\mu_{1,k_1}} \biggr\},
\]
\[
h_2(R)=\sum_{j=1}^{k_2}
\frac{p_{2,j}}{1-\mu_{2,j}R}, \quad0\le R< \min \biggl\{\frac{1}{\mu_{2,1}},
\frac{1}{\mu_{2,2}},\dots ,\frac{1}{\mu_{2,k_2}} \biggr\}.
\]

Furthermore, for all $R\ge0$, we have
\[
\int_0^{+\infty}e^{-Ry}\,dF_2(y)
=\sum_{j=1}^{k_2}\frac{p_{2,j}}{1+\mu_{2,j}R}.
\]

Hence, condition~\eqref{eq:3} can be rewritten as
\begin{equation}
\label{eq:24} \lambda\, \Biggl(\sum_{j=1}^{k_1}
\frac{p_{1,j}}{1-\mu_{1,j}\hat R} \cdot\sum_{j=1}^{k_2}
\frac{p_{2,j}}{1+\mu_{2,j}\hat R} \Biggr) =\lambda+c\hat R,
\end{equation}
where $0<\hat R< \min \{1/\mu_{1,1},1/\mu_{1,2},\dots,1/\mu
_{1,k_1} \}$.
Moreover, we have
\[
\mathbb E [\xi_i ]=h'_1(0)=\sum
_{j=1}^{k_1}p_{1,j}\,\mu _{1,j}=
\mu_1, \qquad\mathbb E [\eta_i ]=h'_2(0)=
\sum_{j=1}^{k_2}p_{2,j}\,\mu
_{2,j}=\mu_2,
\]
\[
\mathbb E \bigl[\xi_i^2 \bigr]=h''_1(0)=
\sum_{j=1}^{k_1}2p_{1,j}\,
\mu_{1,j}^2, \qquad\mathbb E \bigl[\eta_i^2
\bigr]=h''_2(0)=\sum
_{j=1}^{k_2}2p_{2,j}\, \mu_{2,j}^2,
\]
\[
\mathbb E \bigl[\xi_i^3 \bigr]=h'''_1(0)=
\sum_{j=1}^{k_1}6p_{1,j}\,
\mu_{1,j}^3, \qquad\mathbb E \bigl[\eta_i^3
\bigr]=h'''_2(0)=\sum
_{j=1}^{k_2}6p_{2,j}\,\mu_{2,j}^3.
\]

Consequently, we get
\[
\mathbb E \bigl[(\xi_i-\eta_i)^2 \bigr] =2
\Biggl(\sum_{j=1}^{k_1}p_{1,j}\,
\mu_{1,j}^2 -\mu_1\mu_2 +\sum
_{j=1}^{k_2}p_{2,j}\,
\mu_{2,j}^2 \Biggr),
\]\vspace*{-6pt}%
\begin{align*}%
&\mathbb E \bigl[(\xi_i-\eta_i)^3 \bigr]\\
&\quad{} =6
\Biggl(\sum_{j=1}^{k_1}p_{1,j}\,
\mu_{1,j}^3 -\mu_2\sum
_{j=1}^{k_1}p_{1,j}\,\mu_{1,j}^2
+\mu_1\sum_{j=1}^{k_2}p_{2,j}
\,\mu_{2,j}^2 -\sum_{j=1}^{k_2}p_{2,j}
\,\mu _{2,j}^3 \Biggr).
\end{align*}

\begin{ex}
\label{ex:2}
Let $c=10$, $\lambda=4$, $\mu_1=2$, $\mu_2=0{.}5$, $k_1=3$, $k_2=2$,
$p_{1,1}=0{.}4$, $\mu_{1,1}=0{.}5$, $p_{1,2}=0{.}3$, $\mu_{1,2}=2$,
$p_{1,3}=0{.}3$, $\mu_{1,3}=4$,
$p_{2,1}=0{.}75$, $\mu_{2,1}=0{.}4$, $p_{2,2}=0{.}25$, $\mu
_{2,2}=0{.}8$. Then
$\hat R \approx0{.}110607$, which may not be a unique positive
solution to~\eqref{eq:24},
and
\[
\psi_{DV}(x)=0{.}581428\,e^{-0{.}108865\,x}.
\]

The results of computations are given in Table~\ref{table:2}.

\begin{table}
\caption{Results of computations: hyperexponential distributions for
claim sizes and additional funds}\label{table:2}
\begin{tabular*}{\textwidth}{@{\extracolsep{\fill}}D{.}{.}{2.0}D{.}{.}{1.6}D{.}{.}{1.6}D{.}{.}{3.14}D{.}{.}{1.6}D{.}{.}{2.12}@{}}
\hline
\multicolumn{1}{@{}l}{$x$} &
\multicolumn{1}{l}{$\hat\psi(x)$} &
\multicolumn{1}{l}{$\psi_{DV}(x)$} &
\multicolumn{1}{l}{$ \bigl(\frac{\psi_{DV}(x)}{\hat\psi(x)}-1\bigr)\cdot100\%$} &
\multicolumn{1}{l}{$e^{-\hat Rx}$} &
\multicolumn{1}{l@{}}{$ \bigl(\frac{e^{-\hat Rx}}{\hat\psi(x)}-1\bigr)\cdot100\%$} \\
\hline
\rule{0pt}{9pt}0 &0.647560 &0.581428 &-10.21\% &1.000000 &54.43\%\\
1 &0.540924 &0.521454 &-3.60\% &0.895291 &65.51\%\\
2 &0.488597 &0.467667 &-4.28\% &0.801545 &64.05\%\\
5 &0.346390 &0.337363 &-2.61\% &0.575201 &66.06\%\\
10 & 0.202323 &0.195749 &-3.25\% &0.330856 &63.53\%\\
20 & 0.067802 &0.065903 &-2.80\% &0.109466 &61.45\%\\
25 & 0.038194 &0.038239 &0.12\% &0.062965 &64.86\%\\
\hline
\end{tabular*}
\end{table}
\end{ex}

\subsection{Exponential distribution for claim sizes and degenerate
distribution for additional funds}

Let $\xi_i$ be exponentially distributed with mean $\mu_1$ and
$\mathbb E  [\eta_i=\mu_2 ]=1$.
Then condition~\eqref{eq:3} can be rewritten as
\[
\frac{\lambda e^{-\mu_2 \hat R}}{1-\mu_1 \hat R} =\lambda+c\hat R,
\]
where $\hat R\in(0,1/\mu_1)$, which is equivalent to
\begin{equation}
\label{eq:25} \lambda e^{-\mu_2 \hat R} =-c\mu_1 \hat
R^2 +(c-\lambda\mu_1)\hat R +\lambda.
\end{equation}

If $c-\lambda\mu_1+\lambda\mu_2 >0$, then it is easy to check
that~\eqref{eq:25}
has a unique solution $\hat R\in(0,1/\mu_1)$.

Since
\[
\mathbb E [\xi_i ]=\mu_1, \qquad\mathbb E \bigl[
\xi_i^2 \bigr]=2\mu_1^2, \qquad
\mathbb E \bigl[\xi_i^3 \bigr]=6\mu_1^3,
\]
\[
\mathbb E [\eta_i ]=\mu_2, \qquad\mathbb E \bigl[
\eta_i^2 \bigr]=\mu_2^2, \qquad
\mathbb E \bigl[\eta_i^3 \bigr]=\mu_2^3,
\]
we get
\[
\mathbb E \bigl[(\xi_i-\eta_i)^2 \bigr] =2
\mu_1^2-2\mu_1\mu_2+
\mu_2^2,
\]
\[
\mathbb E \bigl[(\xi_i-\eta_i)^3 \bigr] =6
\mu_1^3-6\mu_1^2
\mu_2+3\mu_1\mu_2^2-
\mu_2^3.
\]

\begin{ex}
\label{ex:4}
Let $c=10$, $\lambda=4$, $\mu_1=2$, $\mu_2=0{.}5$. Then
$\hat R \approx0{.}195273$ and
\[
\psi_{DV}(x)=0{.}582498\,e^{-0{.}187764\,x}.
\]

The results of computations are given in Table~\ref{table:3}.

\begin{table}
\caption{Results of computations: exponential distribution for claim
sizes and degenerate distribution additional funds}\label{table:3}
\begin{tabular*}{\textwidth}{@{\extracolsep{\fill}}D{.}{.}{2.0}D{.}{.}{1.6}D{.}{.}{1.6}D{.}{.}{3.14}D{.}{.}{1.6}D{.}{.}{2.13}@{}}
\hline
\multicolumn{1}{@{}l}{$x$} &
\multicolumn{1}{l}{$\hat\psi(x)$} &
\multicolumn{1}{l}{$\psi_{DV}(x)$} &
\multicolumn{1}{l}{$\bigl(\frac{\psi_{DV}(x)}{\hat\psi(x)}-1\bigr)\cdot100\%$} &
\multicolumn{1}{l}{$e^{-\hat Rx}$} &
\multicolumn{1}{l@{}}{$\bigl(\frac{e^{-\hat Rx}}{\hat\psi(x)}-1\bigr)\cdot100\%$} \\
\hline
\rule{0pt}{9pt}
0 &0.637998 &0.582498 &-8.70\% &1.000000 &56.74\%\\
1 &0.549737 &0.482780 &-12.18\% &0.822610 &49.64\%\\
2 &0.465171 &0.400133 &-13.98\% &0.676687 &45.47\%\\
5 &0.277026 &0.227808 &-17.77\% &0.376678 &37.97\%\\
10 & 0.113399 &0.089093 &-21.43\% &0.141886 &25.12\%\\
\hline
\end{tabular*}
\end{table}
\end{ex}

\section{Conclusion}
\label{se:6}

Tables~\ref{table:1}--\ref{table:3} provide results of computations when
the initial surplus is not too large. In this case, the statistical estimates
obtained by the Monte Carlo method can be used instead of the exact ruin
probabilities to compare an accuracy of the exponential
bound and the analogue to the De Vylder approximation. To get appropriate
statistical estimates by the Monte Carlo method for large initial surpluses,
the number of simulations must be exceeding.
The results of computations show that the exponential bound is very rough.
The analogue to the De Vylder approximation gives much more accurate
estimations, especially in the case of hyperexponential distributions
for claim sizes and additional funds. Nevertheless, it is heuristic, and
its real accuracy is unknown.

\end{document}